\theoremstyle{plain}
\newtheorem{lemma}{Lemma}
\numberwithin{equation}{section}
\newcommand{\arctanh}{\operatorname{arctanh}}
\newcommand{\R}{\mathbb{R}}
\providecommand{\one}{\leavevmode\hbox{\small1\kern-3.8pt\normalsize1}}
\begin{document}

\title[Short Title (for the running head)]{A note on ``The Cartan-Hadamard conjecture and the Little Prince"}
\author{S. Michalakis}
\affiliation{Institute for Quantum Information and Matter, Caltech, Pasadena, CA 91125}

\begin{abstract}
We provide elementary proofs of Lemmas 7.1 and 7.4 appearing in "The Cartan-Hadamard conjecture and the Little Prince", by B. Kloeckner and G. Kuperberg. The Lemmas play an important role in the derivation of novel isoperimetric inequalities. The original proofs relied on Sage, a symbolic algebra package, to factor certain algebraic varieties into irreducible components.
\end{abstract}
\maketitle

\section{The two lemmas}
The following lemmas can be found in~\cite{little-prince} as Lemmas 7.1 and 7.4. Elementary versions of their proofs follow below.
\begin{lemma}[Lemma 7.1]
Show that for $x,y \ge 0$ and $\theta \in [0,\pi]$,
\begin{equation}
(\sin\theta)^3 xy + \left((\cos\theta)^3 -3\cos\theta +2\right) (x+y) - (\sin\theta)^3-6\sin\theta -6\theta + 6\pi - 6\arctan(x) +\dfrac{2x}{1+x^2} -6\arctan(y) +\dfrac{2y}{1+y^2} \ge 0,
\end{equation} 
with equality at $x=y=\cot(\theta/2)$.
\end{lemma}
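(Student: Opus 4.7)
The plan is to exploit the separate convexity of the left-hand side $F(x, y, \theta)$ in each of $x$ and $y$, reducing the problem to a one-variable analysis. A direct differentiation gives
\begin{equation*}
\partial_x F = (\sin\theta)^3 y + (1-\cos\theta)^2(2+\cos\theta) - \frac{4(1+2x^2)}{(1+x^2)^2},\qquad \partial_{xx} F = \frac{16 x^3}{(1+x^2)^3} \ge 0,
\end{equation*}
using the factorization $(\cos\theta)^3 - 3\cos\theta + 2 = (1-\cos\theta)^2(2+\cos\theta)$; by symmetry the same convexity holds in $y$. With $t_0 := \cot(\theta/2)$, the half-angle identities $\sin\theta = 2\sin(\theta/2)\cos(\theta/2)$ and $1 + t_0^2 = \csc^2(\theta/2)$ confirm that $(t_0, t_0)$ is a critical point of $F$, and the substitution identity $-6\arctan(\cot(\phi/2)) + 2\cot(\phi/2)/(1+\cot^2(\phi/2)) = -3\pi + 3\phi + \sin\phi$ for $\phi \in (0, \pi]$ reduces $F(t_0, t_0, \theta) = 0$ to straightforward trigonometric cancellation.

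The main step is upgrading this to a global minimum. For fixed $\theta \in (0, \pi)$ and $y \ge 0$, the convex-in-$x$ minimum over $[0, \infty)$ is attained at $x = \Phi(y) := [h(y) - (1-\cos\theta)^2(2+\cos\theta)]/\sin^3\theta$ when this is nonnegative, where $h(y) := 4(1+2y^2)/(1+y^2)^2$, and at $x = 0$ otherwise. Interior critical points of $F$ thus correspond to fixed points of the iterate $\Phi \circ \Phi$, and a direct computation yields the universal value $\Phi'(t_0) = -2$ for all $\theta \in (0, \pi)$. The Hessian determinant at any interior critical point $(a, b)$ equals $\sin^6\theta (|\Phi'(a)\Phi'(b)| - 1)$, which at $(t_0, t_0)$ gives $3\sin^6\theta > 0$, certifying a strict local minimum. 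I would then verify, through an explicit sign-of-$\Phi\circ\Phi - \operatorname{Id}$ monotonicity argument on the admissible subinterval, that $t_0$ is the unique fixed point of $\Phi\circ\Phi$; any hypothetical off-diagonal 2-cycle would be attracting, so $|\Phi'(a)\Phi'(b)| < 1$, forcing $\det H < 0$ there, a saddle rather than a minimum. A boundary analysis of $F$ on $\{x=0\}\cup\{y=0\}$, via $y = \cot(\beta/2)$, reduces to a one-variable inequality dispatched by elementary monotonicity; together with $F \to +\infty$ at infinity, this gives $F \ge 0$ throughout $[0, \infty)^2$ with equality only at $(t_0, t_0)$.

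The hardest part is the uniqueness of the interior critical point: joint convexity of $F$ fails, so one cannot simply minimize, and must instead navigate the dynamics of $\Phi \circ \Phi$. The payoff of the elementary approach is the universal multiplier $\Phi'(t_0) = -2$, which cleanly controls both the local behavior at $(t_0, t_0)$ and the character of any off-diagonal critical points, bypassing the Sage-assisted factorization of the critical variety $\{\partial_x F = \partial_y F = 0\}$ used in the original paper.
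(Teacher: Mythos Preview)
Your approach is genuinely different from the paper's. You hold $\theta$ fixed and study the two--variable system $\partial_x F=\partial_y F=0$, whereas the paper minimises over all three variables and makes essential use of the \emph{third} equation $\partial_\theta G=0$. After the substitution $x=\alpha\cot(\theta/2)$, $y=\beta\cot(\theta/2)$, that third equation collapses to $\cos\theta=(1-\alpha)^{-1}+(1-\beta)^{-1}$; feeding this back into $\partial_x G=\partial_y G=0$ produces the symmetric pair $4\alpha(1-\beta)=(\alpha-1)(1+\alpha\beta)^2$ and its $\alpha\!\leftrightarrow\!\beta$ twin, whose difference forces $\alpha=\beta$ or $\alpha\beta=1$, both of which are then excluded in one line. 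The $\partial_\theta$ equation is precisely the ingredient that lets the algebra close by hand.

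By discarding $\partial_\theta G=0$ you face, for \emph{each} fixed $\theta$, the harder task of excluding off--diagonal two--periodic points of your map $\Phi$. (Incidentally, the explicit formula you write solves $\partial_y F=0$ for $x$, not $\partial_x F=0$; it is the \emph{inverse} of the argmin--in--$x$ map you describe, though the two share the same $2$--cycles, so the substance survives.) Neither of your two routes for this step is actually carried out. The ``sign--of--$\Phi\circ\Phi-\mathrm{Id}$ monotonicity'' argument is merely announced, and it is not evidently easier than the original inequality. The fallback claim that ``any hypothetical off--diagonal $2$--cycle would be attracting'' is not justified: from $(\Phi\circ\Phi)'(t_0)=4>1$ one can infer only that the \emph{nearest} $2$--cycle on each side of $t_0$, if one exists, is non--repelling; nothing rules out further $2$--cycles with multiplier exceeding $1$, nor degenerate tangencies with multiplier equal to $1$. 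Your computations $\Phi'(t_0)=-2$ and $\det H=\sin^6\theta\bigl(|\Phi'(a)\Phi'(b)|-1\bigr)$ are correct and attractive, but they do not by themselves finish the proof; the paper's use of $\partial_\theta G=0$ is what trades this dynamical uniqueness question for a short algebraic one.
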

\begin{proof}
We begin by setting 
\begin{multline}
G(\theta,x,y) = (\sin\theta)^3 xy + \left((\cos\theta)^3 -3\cos\theta +2\right) (x+y) - (\sin\theta)^3-6\sin\theta -6\theta + 6\pi - 6\arctan(x) +\dfrac{2x}{1+x^2} \\ -6\arctan(y) +\dfrac{2y}{1+y^2}.
\end{multline}
In the proof of Lemma 7.1 in~\cite{little-prince}, the authors begin by showing that the negative values of $G(\theta,x,y)$ are confined in a compact subset of $[0,2\pi]\times \R^2$. Hence, we can use the derivative and boundary-value test to prove positivity.

It is easy to see that $G(0,x,y) = 6\pi - 6\arctan(x) +\dfrac{2x}{1+x^2} -6\arctan(y) +\dfrac{2y}{1+y^2} \ge 0$, with equality only when $x = y = \cot(0) = \infty$, so we may assume that $\theta > 0$.
Noting that the following identity holds (by differentiating both sides):
\[4x - 6\arctan(x) +2x/(1+x^2) = 4 \int_0^x \frac{s^4}{(1+s^2)^2} ds,\]
(and similarly for $y$), we see that the condition $G(\theta,x,y) \ge 0$, is equivalent to:
\begin{equation}\label{eqn:new_version}
(\sin\theta)^3 xy + ((\cos\theta)^3 -3\cos\theta -2) (x+y) - (\sin\theta)^3-6\sin\theta -6\theta + 6\pi + 4 \int_0^x \frac{s^4}{(1+s^2)^2} ds+4 \int_0^y \frac{s^4}{(1+s^2)^2} ds \ge 0.
\end{equation}
Setting the partial derivatives of $G(\theta,x,y)$ to zero, yields in turn:
\begin{eqnarray}\label{eqn:partial_theta}
\partial_\theta G(\theta,x,y) &=& 0 \implies \sin^2(\theta) (\cos(\theta) xy + \sin(\theta)(x+y)) = 2 (1+\cos(\theta))+\sin^2(\theta)\cos(\theta),\\ \label{eqn:partial_x}
\partial_x G(\theta,x,y) &=& 0 \implies \sin^3(\theta) y + 4x^4/(1+x^2)^2 = (1+\cos\theta)^2 + \sin^2\theta (1+\cos\theta),\\ \label{eqn:partial_y}
\partial_y G(\theta,x,y) &=& 0 \implies \sin^3(\theta) x + 4y^4/(1+y^2)^2 = (1+\cos\theta)^2 + \sin^2\theta (1+\cos\theta),
\end{eqnarray}
where we used $(\cos\theta)^3 -3\cos\theta -2 = - (1+\cos\theta)^2(2-\cos\theta) = -(1+\cos\theta)^2 - \sin^2\theta (1+\cos\theta),$ to get the last two equations. 

Now, noting that $2(1+\cos(\theta))+\sin^2(\theta)\cos(\theta) = \sin^2(\theta) \left(\cos\theta \cot^{2}(\theta/2) + 2\sin\theta \cot(\theta/2)\right),$ where we used, $\cot(\theta/2) = (1+\cos\theta)/\sin(\theta)$, we get from Eqn~\eqref{eqn:partial_theta}:
$$ \partial_\theta G(\theta,x,y) = 0 \implies \sin^2(\theta) \left[\cos(\theta) (xy-\cot^{2}(\theta/2)) + \sin(\theta)(x+y-2\cot(\theta/2))\right] = 0.$$
Hence, $\theta = \pi$, or 
\begin{equation}\label{eqn:theta_cond}
\cos(\theta) (\cot^2(\theta/2)-xy) = \sin(\theta)(x+y-2\cot(\theta/2)), \,\theta \in (0,\pi).
\end{equation}
A quick check shows that:
$G(\pi,x,y) = 4 \int_0^x \frac{s^4}{(1+s^2)^2} ds+4 \int_0^y \frac{s^4}{(1+s^2)^2} ds \ge 0,$
since $x,y \ge 0$, with equality when $x=y=\cot(\pi/2)=0$. To treat the remaining case, we make the following substitutions:
\begin{equation}\label{eqn:subs}
x= \alpha \cot(\theta/2), \quad y = \beta \cot(\theta/2).
\end{equation}

Substituting the above in equation~\eqref{eqn:theta_cond}, and using $\cot(\theta/2) = (1+\cos\theta)/\sin\theta$, we get:
$$(1-\alpha\beta) \cos\theta = ((\alpha-1) + (\beta-1)) (1-\cos\theta) ,$$
which can be further simplified to:
$$(1-\alpha)(1-\beta) \cos\theta = (1-\alpha) + (1-\beta).$$
Now, if $\alpha=1$, then $\beta =1$ (and vice versa), and equation~\eqref{eqn:subs} implies that $x=y=\cot(\theta/2)$, which is what we set out to prove. So, we may assume that $(1-\alpha)(1-\beta) \neq 0$ and that:
$$\cos\theta = (1-\alpha)^{-1} + (1-\beta)^{-1}, \quad \theta \in (0,\pi).$$
The rest of the proof verifies that the above conditions on $\alpha, \beta$ and $\theta$ are incompatible with \eqref{eqn:partial_x} and \eqref{eqn:partial_y}, so that the only global minimum is attained at $x=y=\cot(\theta/2)$, at which point $G(\theta,x,y) = 0$.

We begin with \eqref{eqn:partial_x}. Using equation~\eqref{eqn:subs} and the trigonometric identities $\cot(\theta/2) = (1+\cos\theta)/\sin\theta$ and $ \cot^2(\theta/2) = (1+\cos\theta)/(1-\cos\theta)$, we see that $\partial_x G(\theta,x,y) = 0$, is equivalent to the following equality:
\begin{equation}
4\alpha^4 =\left((\alpha^2-1)\cos\theta+\alpha^2+1\right)^2 \left(1 + (1-\beta)(1-\cos\theta)\right).
\end{equation}
Substituting the expression we derived above for $\cos\theta$, we get:
\begin{equation}\label{eqn:alpha0}
4\alpha^4 (1-\beta) = \alpha (\alpha-1) (1+\alpha + \alpha(1-\beta))^2,
\end{equation}
where we used $1 + (1-\beta)(1-\cos\theta) = \alpha (1-\beta) (\alpha-1)^{-1}$ and
$(\alpha^2-1)\cos\theta+\alpha^2+1 = (\alpha+1)((\alpha-1)\cos\theta+1)+\alpha(\alpha-1) = (\alpha-1)(1-\beta)^{-1} (2\alpha + 1-\alpha\beta).$

If $\alpha = 0$, then equation \eqref{eqn:alpha0} with $\alpha$ and $\beta$ exchanged (derived by taking the partial w.r.t. $y$ instead of $x$, and noting that $G(\theta,x,y) = G(\theta,y,x)$) yields $4\beta^4 = \beta(\beta-1)(1+2\beta)^2$, which is equivalent to $\beta (3\beta+1) = 0$. Since $\beta \ge 0$, we have that $\alpha = 0$ implies $\beta = 0$, which leads to the contradiction $\cos\theta = 2$ (note that $x=y=0$ minimizes $G(\theta,x,y)$ when $\theta=\pi$, which was treated separately above). By symmetry, we also know that if $\beta = 0$, then $\alpha = 0$, since otherwise $3\alpha+1 = 0$, which is a contradiction. Hence, we can assume that $\alpha, \beta > 0$.

Dividing through by $\alpha$ and re-writing $4\alpha^3(1-\beta) = 4\alpha(1+\alpha)(\alpha-1)(1-\beta) + 4\alpha(1-\beta)$, equation \eqref{eqn:alpha0} yields:
\begin{equation}\label{eqn:alpha}
4\alpha (1-\beta) = (\alpha-1)(1+\alpha\beta)^2,
\end{equation}
which may be further simplified to:
$$4\alpha +(1-\alpha\beta)^2 = \alpha (1+\alpha\beta)^2.$$
Moreover, by exchanging $\alpha$ and $\beta$ (due to the symmetry of $G$ discussed above), we also have the equation:
$$4\beta +(1-\alpha\beta)^2 = \beta (1+\alpha\beta)^2.$$
Subtracting the two equations yields:
$$4(\alpha-\beta) = (\alpha-\beta)(1+\alpha\beta)^2,$$
which implies that $ \alpha = \beta$ and/or $ \alpha\beta =1$. If $\alpha = \beta$, the \eqref{eqn:alpha} becomes 
$$ 4\alpha (1-\alpha) = (\alpha-1)(1+\alpha^2)^2,$$ 
which immediately implies $\alpha = 1$ (since the left and right side of the equality have opposite signs otherwise). This contradicts our earlier assumption that $(1-\alpha)(1-\beta) \neq 0$. If $\alpha\beta = 1$, then $\cos\theta = (1-\alpha)^{-1}+(1-1/\alpha)^{-1} = 1$, which contradicts the assumption that $\theta \in (0,\pi)$. This completes the proof.
\end{proof}

\begin{lemma}[Lemma 7.4]
Let
\begin{multline}
F(\ell,x,y) = \sinh(\ell)^3xy - \left(\cosh(\ell)^3-3\cosh(\ell)+2\right)(x+y)
    + \sinh(\ell)^3 - 6\sinh(\ell) - 6\ell + 6\arctanh\left(\frac1x\right)
    + \frac{2x}{x^2-1} \\
    + 6\arctanh\left(\frac1y\right) + \frac{2y}{y^2-1}.
\label{e:Fn4} \end{multline}
The function $F(\ell,x,y)$ is non-negative for $\ell \ge 0$ and $x,y \ge 1$, vanishing only when $x = y = \coth(\ell/2)$.
\end{lemma}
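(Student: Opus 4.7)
I will follow the blueprint of the proof of Lemma 7.1, replacing every trigonometric identity by its hyperbolic counterpart and tracking the new domain $x,y\ge 1$, $\ell\ge 0$. The first step is a boundary analysis that confines any potential negative values of $F$ to a compact subset of the interior. As $x\to 1^+$ (or $y\to 1^+$) the term $6\arctanh(1/x)+2x/(x^2-1)$ diverges to $+\infty$, so $F\to+\infty$. At $\ell=0$, $F(0,x,y)=6\arctanh(1/x)+2x/(x^2-1)+6\arctanh(1/y)+2y/(y^2-1)\ge 0$, vanishing only in the limit $x,y\to\infty$, which is consistent with $\coth(0/2)=\infty$. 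For $\ell\to\infty$ (with $x,y>1$ fixed) or for $x\to\infty$ or $y\to\infty$, direct asymptotic analysis using $\cosh^3\ell-3\cosh\ell+2=(\cosh\ell-1)^2(\cosh\ell+2)$ and the identity $xy+1-x-y=(x-1)(y-1)\ge 0$ shows $F\to+\infty$.

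Next I compute the partial derivatives, making systematic use of the hyperbolic identities $\coth(\ell/2)=(1+\cosh\ell)/\sinh\ell$ and $\cosh^3\ell-3\cosh\ell+2=(\cosh\ell-1)^2(\cosh\ell+2)$. Setting $\partial_\ell F=0$ and dividing by $\sinh^2\ell$ (valid for $\ell>0$) yields the hyperbolic analog of~\eqref{eqn:theta_cond}:
\[
\cosh(\ell)\,(xy-\coth^2(\ell/2))=\sinh(\ell)\,(x+y-2\coth(\ell/2)).
\]
After the substitution $x=\alpha\coth(\ell/2)$, $y=\beta\coth(\ell/2)$, this becomes $\cosh(\ell)(\alpha-1)(\beta-1)+[(\alpha-1)+(\beta-1)]=0$. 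Either $\alpha=\beta=1$, which gives the claimed minimum $x=y=\coth(\ell/2)$ with $F=0$, or
\[
\cosh(\ell)=\frac{1}{1-\alpha}+\frac{1}{1-\beta},\qquad \ell\in(0,\infty).
\]

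To rule out the second alternative inside the open region, I feed it into $\partial_xF=0$. Using the algebraic identity $(\alpha^2-1)\cosh\ell+(\alpha^2+1)=(\alpha-1)(\alpha\beta-2\alpha-1)/(\beta-1)$, the condition $\partial_xF=0$ simplifies to the polynomial equation
\[
\alpha^2\beta^2(\alpha-1)+2\alpha\beta(\alpha+1)-(3\alpha+1)=0.
\]
The $x\leftrightarrow y$ symmetry of $F$ gives the same equation with $\alpha,\beta$ swapped, and subtracting the two yields the clean factorization $(\alpha-\beta)(\alpha\beta-1)(\alpha\beta+3)=0$. Since $\alpha,\beta>0$, the factor $\alpha\beta+3$ is irrelevant. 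If $\alpha\beta=1$, then the numerator $-(\alpha+\beta-2)$ and denominator $(\alpha-1)(\beta-1)=2-(\alpha+\beta)$ in the formula for $\cosh\ell$ coincide, forcing $\cosh\ell=1$ and hence $\ell=0$, outside the open interior. If $\alpha=\beta$, the polynomial reduces to $(\alpha-1)(\alpha^4+2\alpha^2+4\alpha+1)=0$; the quartic factor is strictly positive for $\alpha>0$, so $\alpha=1$, returning us to the first alternative.

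The main technical obstacle I anticipate is the derivation of the algebraic identity $(\alpha^2-1)\cosh\ell+(\alpha^2+1)=(\alpha-1)(\alpha\beta-2\alpha-1)/(\beta-1)$ from the $\partial_\ell F=0$ relation $\cosh\ell=1/(1-\alpha)+1/(1-\beta)$: a careless factoring here produces an incorrect polynomial whose spurious roots have to be pruned with additional case analysis. Once this identity is secured, the clean factorization $(\alpha-\beta)(\alpha\beta-1)(\alpha\beta+3)=0$ collapses every interior critical point to the claimed minimum $x=y=\coth(\ell/2)$, and combining this with the boundary analysis gives $F\ge 0$ on the full domain, with equality only at $x=y=\coth(\ell/2)$.
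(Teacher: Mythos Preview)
Your proposal is correct and follows essentially the same strategy as the paper: reduce to the $\partial_\ell$, $\partial_x$, $\partial_y$ critical-point equations, substitute $x=\alpha\coth(\ell/2)$, $y=\beta\coth(\ell/2)$, eliminate $\cosh\ell$ via $\cosh\ell=1/(1-\alpha)+1/(1-\beta)$, and conclude $\alpha=\beta=1$ after subtracting the $x$- and $y$-equations. Your polynomial $\alpha^2\beta^2(\alpha-1)+2\alpha\beta(\alpha+1)-(3\alpha+1)=0$ is algebraically equivalent to the paper's $4\alpha(1-\beta)=(\alpha-1)(1+\alpha\beta)^2$, and your factorization $(\alpha-\beta)(\alpha\beta-1)(\alpha\beta+3)=0$ is just the factored form of the paper's $(\alpha-\beta)\bigl[(1+\alpha\beta)^2-4\bigr]=0$; likewise your explicit quintic factoring $(\alpha-1)(\alpha^4+2\alpha^2+4\alpha+1)=0$ in the $\alpha=\beta$ case replaces the paper's sign argument on $4\alpha(1-\alpha)=(\alpha-1)(1+\alpha^2)^2$, but both immediately force $\alpha=1$.
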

\begin{proof}
The proof follows the steps of the argument given above for Lemma 1 almost identically (the argument confining the negative values of $F$ in a compact set can be found in the beginning of the proof of Lemma 7.4 in~\cite{little-prince}). In particular, applying similar reasoning as before and using $\coth(\ell) = (1+\cosh\ell)/\sinh\ell$, one can show that:
\begin{eqnarray}\label{eqn:partial_ell}
\partial_\ell F(\ell,x,y) &=& 0 \implies \sinh^2(\ell) \left[\cosh(\ell) (xy-\coth^2(\ell)) - \sinh(\ell)(x+y-2\coth(\ell))\right] = 0,\\ \label{eqn:partial_x2}
\partial_x F(\ell,x,y) &=& 0 \implies \sinh^3(\ell) y - 4x^4/(x^2-1)^2 = -(1+\cosh\ell)^2 + \sinh^2\ell (1+\cosh\ell),
\end{eqnarray}
and similarly for $\partial_y F(\ell,x,y)$, since $F(\ell,x,y)$ is symmetric in $x$ and $y$. It is trivial to check that $F(0,x,y) \ge 0$, with equality only when $x=y=\coth(0)=\infty$. Hence, we will assume that $\ell > 0$ from now on. Setting $x = \alpha \coth(\ell/2), y = \beta\coth(\ell/2)$ and using $\cosh^2(\ell)-\sinh^2(\ell)=1$, we get:
\begin{eqnarray}
\partial_\ell F(\ell,x,y) &=& 0 \implies (1-\alpha\beta) \cosh(\ell) = \left((\alpha-1) + (\beta -1)\right) (1-\cosh\ell),\\
\partial_x F(\ell,x,y) &=& 0 \implies 4\alpha^4 =\left((\alpha^2-1)\cosh\ell+\alpha^2+1\right)^2 \left(1 + (1-\beta)(1-\cosh\ell)\right),\\
\partial_y F(\ell,x,y) &=& 0 \implies 4\beta^4 =\left((\beta^2-1)\cosh\ell+\beta^2+1\right)^2 \left(1 + (1-\alpha)(1-\cosh\ell)\right).
\end{eqnarray}
Noting that the above conditions for extrema of $F(\ell,x,y)$ are identical to the ones we derived for $G(\theta,x,y)$ in the proof of Lemma 1 (if we swap $\cos\theta$ with $\cosh\ell$), we see that the only values of $\alpha$ and $\beta$ satisfying all three conditions are $\alpha = \beta =1$ (one still needs to check that $\alpha = \beta = 0$ is not allowed, which follows from the requirement that $x,y \ge 1$). Hence, the global minimum of $F(\ell,x,y)$ is attained at $x=y=\cot(\ell/2)$, at which point it is each to check that $F(\ell,x,y)=0$.
\end{proof}

\acknowledgments
S. Michalakis would like to acknowledge support provided by the Institute for Quantum Information and Matter, an NSF Physics Frontiers Center with support of the Gordon and Betty Moore Foundation through Grant \#GBMF1250 and by the AFOSR Grant \#FA8750-12-2-0308.

\end{document}